\DeclarePairedDelimiter\Floor\lfloor\rfloor
\newcommand{\floor}[1]{\left\lfloor#1\right\rfloor}
\newcommand{\ceil}[1]{\left\lceil#1\right\rceil}
\newcommand{\discrepancy}[1]{\text{dis}(#1)}
\newtheorem{thm}{Theorem}[section]
\newtheorem{lem}[thm]{Lemma}
\newtheorem{case}{Case}
\newtheorem{conj}[thm]{Conjecture}
\newtheorem{openquestion}{Open question}
\numberwithin{equation}{section}
\newcommand \track[1] {{\color{blue} #1}}
\newcommand \sout[1] {{\color{red} #1}}
\renewcommand\track[1]{{\color{black} #1}}
\renewcommand\sout[1]{{\iffalse #1 \fi}}
\begin{document}

\title{The Cordiality Game and the Game Cordiality Number}

\author{Elliot Krop}
\email{Elliot Krop (\tt elliotkrop@clayton.edu)}

\author{Aryan Mittal}
\email{Aryan Mittal (\tt aryan.mittal@gatech.edu)}

\author{Michael C. Wigal}
\email{Michael Wigal (\tt \sout{wigal@gatech.edu} \track{wigal@illinois.edu})}

\address{Department of Mathematics, Clayton State University}
\address{Georgia Institute of Technology}
\address{Department of Mathematics, University of Illinois Urbana-Champaign}

\date{\today}

\maketitle

\begin {abstract}
The \emph{cordiality game} is played on a graph $G$ by two players, Admirable (A) and Impish (I), who take turns selecting \track{unlabeled} vertices of $G$. Admirable labels the selected vertices by $0$ and Impish by $1$, and the resulting label on any edge is the sum modulo $2$ of the labels of the vertices incident to that edge. The two players have opposite goals: Admirable attempts to minimize the number of edges with different labels as much as possible while Impish attempts to maximize this number. When both Admirable and Impish play their optimal games, we define the \emph{game cordiality number}, $c_g(G)$, as the absolute difference between the number of edges labeled zero and one. Let $P_n$ be the path on $n$ vertices. We show $c_g(P_n)\le \frac{n-3}{3}$ when $n \equiv 0 \pmod 3$, $c_g(P_n)\le \frac{n-1}{3}$ when $n \equiv 1 \pmod 3$, and $c_g(P_n)\le \frac{n+1}{3}$ when $n \equiv 2\pmod 3$. Furthermore, we show a similar bound, $c_g(T) \leq \frac{|T|}{2}$ holds for any tree $T$.
\\[\baselineskip] 2020 Mathematics Subject
      Classification:  05C57, 05C78
\\[\baselineskip]
      Keywords: cordial labeling, cordiality game, game cordiality number, trees
\end {abstract}

\section{Introduction and Definitions}

It is natural to study combinatorial functions under the regime of two-person games, often presenting new perspective\track{s} to old problems. One such example is the classic cops and robbers formulation of tree-width \cite{ST}. More examples include game chromatic number, see \cite{BGKZ} and \cite{TZ}, and the more recent game domination number \cite{BKR}, which has bloomed into numerous publications, of which we only list a few, e.g., \cite{BHZ,HK,KWZ}, and the monograph \cite{BHKR}. For more examples of graph-theoretic games, see the extended bibliography \cite{F} for more references.

With this in mind, we define a new game to study a simple ``algebraic balance" in graphs, which is motivated from the following definition. A graph $G$ is \emph{cordial} if there exists a labeling of the vertices of $G$ by $0$ and $1$, where labels on edges are defined as the sum of incident vertex labels modulo $2$ such that:
\begin{itemize}
    \item the absolute difference in the number of vertices labeled $0$ and $1$ is no more than one;
    \item the absolute difference in the number of edges labeled $0$ and $1$ is no more than one.
\end{itemize} In other words, a graph is cordial, if there exists a bipartition of its vertex set, such that the size of each part is $\ceil{|V(G)|/2}$ or $\floor{|V(G)|/2}$, and the number of edges that goes across the cut induced by the bipartition is either $\ceil{|E(G)|/2}$ or $\floor{|E(G)|/2}$. Cordial labelings were first defined in \cite{Cahit} and extended to $k$-cordial labelings in \cite{Hovey}, which includes several difficult and outstanding problems. One particular simple and natural problem is the $3$-cordial conjecture.

\medskip

A graph $G$ is $3$-cordial, if there exists a vertex labeling of $G$ satisfying the following two conditions: 
\begin{itemize}
\item The vertices are labeled by the integers $0, 1,$ and $2$ such that for any integer in $\{0,1,2\}$, there is at most one more vertex labeled by one of the integers than of vertices labeled by any other fixed integer. 
\item If for any edge, the label on that edge is found by summing its incident vertices modulo $3$, then there is at most one more edge labeled by one of the integers than of edges labeled by a different fixed integer.
\end{itemize}

\begin{conj}[Hovey \cite{Hovey}]
All graphs are $3$-cordial.
\end{conj}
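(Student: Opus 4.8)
The plan is to attack the conjecture by induction, reducing an arbitrary graph to graphs with fewer edges or vertices while carrying along a flexible labeling that can be repaired. First I would dispose of the disconnected case, which is subtler than it looks: because both balance conditions are global, a $3$-cordial labeling of $G$ is \emph{not} just the concatenation of $3$-cordial labelings of its components. Instead I would track, for any labeling, the two \emph{surplus vectors} $(v_0,v_1,v_2)$ and $(e_0,e_1,e_2)$ recording how many vertices and edges carry each label in $\Z_3$, and combine components so that their surpluses cancel. This reframes the conjecture as the statement that every graph admits a vertex-labeling whose vertex-surplus and edge-surplus vectors both lie in the \emph{balanced box}, where the three coordinates differ pairwise by at most one.

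Next I would assemble a stock of base cases to induct from. Trees and cycles are already known to be $3$-cordial, so for a connected $G$ I would fix a spanning tree $T$, start from a $3$-cordial labeling of $T$, and re-insert the non-tree edges one at a time, recording at each step how the edge-surplus vector is displaced. The central technical device I would try to build is a \emph{library of local moves}: swapping the labels on a pair, or on a short path, of vertices, together with an exact computation of how each move perturbs the edge-surplus vector on the incident edges while leaving the vertex-surplus vector inside the balanced box. The target is a connectivity statement: from any vertex-balanced labeling one can reach a labeling with edge-surplus in the balanced box through a sequence of vertex-balance-preserving swaps, i.e.\ the reachable set of edge-surplus vectors is connected and meets the balanced box.

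The main obstacle---and the reason this is a long-standing open problem rather than a routine induction---is exactly this last step: the vertex-balance and edge-balance constraints are coupled, and no local move is known that provably improves edge-balance, respects vertex-balance, and terminates. A single swap alters \emph{every} edge at the two swapped vertices at once, so the displacement of the edge-surplus vector is highly graph-dependent and can overshoot the balanced box. I would therefore expect the genuine content to lie either in a monovariant---a potential function that strictly decreases under a carefully chosen family of swaps---or in an algebraic encoding over $\Z_3$ that linearizes the edge-label map enough to control these displacements. Absent such a device, the reduction and edge-addition skeleton above only transfers the difficulty onto bounding these perturbations, which is the heart of the matter and the point at which I would expect the argument to stall.
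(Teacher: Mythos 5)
You have not proved anything here, and to be fair your final paragraph admits as much --- but it is worth being precise about the status of this statement. It is Hovey's conjecture, which the paper records as open, with no progress known in thirty years; the paper itself contains no proof, so there is no argument of the authors to compare yours against. What you have written is a research program whose decisive step is explicitly left open: the claim that from any vertex-balanced labeling one can reach, by vertex-balance-preserving swaps, a labeling whose edge-surplus vector lies in the balanced box. Everything else in your outline (reduction to connected graphs by cancelling surplus vectors, a spanning-tree base case using the known $3$-cordiality of trees, re-inserting non-tree edges) is bookkeeping that merely re-expresses the conjecture in the language of surplus vectors; none of it reduces the difficulty, and the ``connectivity of the reachable set'' lemma is exactly the conjecture in disguise.

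Two concrete points where the skeleton would fail even as a reduction. First, a single swap at vertices $u,v$ perturbs the edge-surplus vector by an amount governed by the labels on all edges incident to $u$ and $v$, so the displacement is not bounded by any absolute constant but grows with the degrees; your ``library of local moves'' therefore cannot have graph-independent effect tables, and no potential-function argument can be set up until you specify a move family whose worst-case displacement is controlled --- which you do not. Second, the disconnected case is itself an unproved vector-balancing assertion: even granting that each component admits many labelings, you would need each component to realize a sufficiently rich \emph{set} of surplus-vector pairs that the sets can be combined to cancel, and nothing in the proposal establishes such richness. So the verdict is simple: the statement remains a conjecture, your text correctly identifies where the difficulty sits (the coupling of the two balance constraints under relabeling), but it supplies no mechanism --- no monovariant, no algebraic linearization over $\mathbb{Z}_3$ --- to get past it.
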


We know of no progress made on this problem in the last thirty years. Some examples of other extensions and variants of cordiality problems from \cite{Hovey} can be seen in \cite{Cichacz, CGT, DKN, PP, PW}.
\medskip

Let $G = (V,E)$ be a graph. The \emph{cordiality game} is played on $G$ by two players, Admirable and Impish, who take turns selecting the \track{unlabeled} vertices of $G$. Admirable labels selected vertices by $0$ and Impish labels selected vertices by $1$. The labels on edges are then determined by the sum of incident vertex labels modulo $2$. Admirable's goal is to produce a labeling of $E$ with the minimum number of different labels while Impish attempts to produce the labels of $E$ with the maximum number of different labels. In other words, after all the vertices are labeled, if we let $e_0$ be the number of edges labeled by $0$ and $e_1$ be the number of edges labeled by $1$, then we define the \emph{discrepancy} to be $d = |e_1 - e_0|$. Then Admirable attempts to minimize $d$ and Impish attempts to maximize $d$. Note at the end of the game, the vertex labelings induce a balanced bipartition of the graph. 

\medskip

We define the \emph{game cordiality number}, $c_g(G)$, to be the value of $d$ when both players play optimally. Further, to prove our claimed bounds, we create a variant of the cordiality game where Impish starts rather than Admirable. In this case, the discrepancy, assuming optimal play of both competitors, is denoted $c_g'(G)$, which we call the Impish-starts game cordiality number. Finally, consider a further variant of the cordiality game, in which Impish starts and may pass one time during play, so that Admirable moves twice consecutively at some point in the game. For this game, the discrepancy, assuming optimal play of both parties, is denoted $c_g^*(G)$. Note that $c_g'(G) \le c_g^*(G)$, as Impish may always choose a strategy that avoids passing \sout{their} \track{his} turn.

\medskip

Our cordiality game can be interpreted as a classic positional game under a small adjustment. First introduced by Chv{\'a}tal and Erd{\H o}s \cite{CE}, the Maker-Breaker game has received significant attention prior in the literature, e.g., see \cite{HKSS,Krivelevich}. A family ${\mathcal F}$ of winning subsets of some set $X$ is known to both players prior. The players alternate between selecting \track{ previously unselected} elements. At the end, Maker wins if he obtains a set of ${\mathcal F}$, otherwise Breaker wins. For some fixed integer $k \ge 0$, let ${\mathcal F}_k$ be the family of vertex sets which are parts of a balanced bipartition of discrepancy at most $k$. Note then for the cordiality game, Maker would play the role of Admirable, Breaker taking the role of Impish. Clearly, setting $k = c_g(G)$, a winning strategy for one game translates to the other.

In Section $2$, we present an upper bound for $c_g(P_n)$ when $P_n$ is a path on $n$ vertices. We \sout{show} \track{prove}

\[
  c_g(P_n) \le
  \begin{cases}
                                   \frac{n-3}{3} & n \equiv 0 \pmod 3\\
                                   \frac{n-1}{3} & n \equiv 1 \pmod 3\\
                                   \frac{n+1}{3} & n \equiv 2 \pmod 3\\
  \end{cases}
\]

In Section $3$, we show that for any tree $T$, $c_g(T)\le \frac{1}{2}n$. In Section $4$, we introduce a variant of the cordiality game and end with some open questions.

We finish this section with some notation. For a graph $G$, we let $\phi$ denote the \emph{cut function} for $G$. That is, for all $S \subseteq V(G)$, $\phi(S) = |\{uv \in E(G) : u \in S, v \not \in S\}|$. We let \sout{$|\discrepancy{S}|$} \track{$\discrepancy{S}$} denote the discrepancy of the cut induced by $S$, where
$\discrepancy{S} =  \phi(S) - (|E(G)| - \phi(S)) = 2\phi(S) - |E(G)|$. Note then the objective of the cordiality game is \sout{$|\discrepancy{S}|$} \track{$\discrepancy{S}$} where $S$ is the set of vertices labeled 0 or 1. \sout{Further note by parity,  $\discrepancy{S}$ is always either odd or even.} \track{Furthermore, we note the parity of $\discrepancy{S}$ is always the same as that of $|E(G)|$.} A \emph{balanced partition} of a graph $G$ is a partition of $V(G)$ into two sets, say $S_1$ and $S_2$, such that $||S_1| - |S_2|| \le 1$.

\section{Paths}

The problem of finding the exact value of $c_g(P_n)$ is surprisingly involved. We begin with calculating the game cordiality number for small paths.

\begin{lem}\label{base}
    We have the following:

    \begin{enumerate}[i.]
        \item $c_g(P_3) = c'_g(P_3) = c_g^*(P_3) = 0$,
        \item $c_g(P_4) = c'_g(P_4) = c_g^*(P_4) = 1$,
        \item $c_g(P_5) = c'_g(P_5) = c_g^*(P_5) \le 2$,
        \item $c_g(P_6) = c'_g(P_6) = c^*_g(P_6) = 1$.
    \end{enumerate}
\end{lem}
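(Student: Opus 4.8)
My plan is to recast the discrepancy as a transition count and then reduce each statement to a small, symmetry‑pruned game tree. Order the vertices $v_1,\dots,v_n$ along the path and read the final labeling as a binary string; an edge $v_iv_{i+1}$ is labeled $1$ exactly when consecutive labels differ, so $\phi(S)$ equals the number $t$ of such transitions and $\discrepancy{S}=|2t-(n-1)|$. Since $|E(P_n)|=n-1$, the parity remark immediately supplies all the lower bounds for the basic game: $d$ is even for $P_3,P_5$ and odd, hence $\ge 1$, for $P_4,P_6$. Thus in (i), (ii), (iv) the bounds $c_g\ge 0$, $c_g\ge 1$, $c_g\ge 1$ are free, and the real content lies in the matching upper bounds, together with a lower bound of $2$ for $P_5$.

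The substantive step is to give Admirable, who is the minimizer in every variant, a strategy capping $t$ in the right window; the guiding principle is that $d$ is small precisely when the number of runs of equal labels is intermediate, so the string must be neither sorted (too few transitions) nor alternating (too many). For $P_3$, Admirable seizes the central vertex $v_2$ when moving first, forcing the lone opposite label to an endpoint and $t=1$; when moving second he answers Impish's opening by taking an end vertex, again stranding the minority label at an end. For $P_4$ and $P_5$ the only threat is over‑alternation: on $P_4$ Admirable prevents $0101$ and $1010$, and on $P_5$ he takes an interior even vertex, killing the unique $t=4$ string $01010$ and forcing $d\le 2$. I would present each of these as a short decision tree, invoking the left–right reflection of $P_n$ to assume the opening move lies in one half and so halve the casework.

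The delicate case is $P_6$, where $d=1$ demands $t\in\{2,3\}$: Admirable must avoid \emph{both} the sorted arrangement ($t=1$, giving $d=3$) and the alternating one ($t=5$, giving $d=5$), and this two‑sided target is what makes the path problem genuinely involved. Here I would exhibit an explicit Admirable strategy steering the run count to three or four and verify it against each Impish reply, again trimming by reflection. For the $P_5$ lower bound I would produce an Impish strategy forcing $d\ge 2$: Impish responds so as to deny the few position‑pairs in which the minority label could realize $t=2$ (for instance by first claiming the center), so that $t\ne 2$ and hence $d\ge 2$ in each of the three variants, pinning the common value at $2$.

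For the pass variant I would invoke the supplied inequality $c_g'\le c_g^*$: since the values of $c_g'$ are established above, the lower bounds for $c_g^*$ follow at once, and it remains only to prove the matching upper bound $c_g^*\le(\text{value})$. Concretely, I must check that Admirable can still hold $t$ in the required window when Impish exercises his one‑time pass, i.e.\ that the extra consecutive Admirable move cannot be exploited to enlarge $t$. Because these paths are small, I would confirm this directly by rerunning the relevant decision tree with the pass inserted at each possible moment, showing Admirable's blocking strategy survives. The main obstacle I anticipate is bookkeeping rather than insight: $P_6$ produces the largest game tree, its two‑sided target makes Admirable's strategy the least mechanical, and the pass adds a branch that must be verified not to break that strategy; the reflection symmetry and the parity reduction should keep each individual verification short.
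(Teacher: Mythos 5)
Your framework---reading the final labeling as a binary string, counting transitions $t$, writing the discrepancy as $|2t-(n-1)|$, using parity for the lower bounds, and giving Admirable blocking strategies---is the same approach as the paper's, and your handling of $P_3$, $P_4$, and the upper bound for $P_5$ matches the paper's pairing arguments. The genuine gap is in $P_6$, the only case that carries real difficulty (and the case on which the induction in Theorem \ref{ub} and Case \ref{Case:7} of Theorem \ref{tree} depend). You correctly note that $d=1$ requires $t\in\{2,3\}$, but you then identify the obstructions as only the sorted arrangement ($t=1$) and the alternating one ($t=5$). This list is incomplete: $t=4$ also gives $d=|8-5|=3$, and there are four such balanced labelings, namely $\{v_2,v_3,v_5\}$, $\{v_2,v_4,v_5\}$, $\{v_1,v_4,v_6\}$, $\{v_1,v_3,v_6\}$; the paper's proof must dodge eight bad sets, not four. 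A strategy designed only to avoid sorted and alternating strings can lose: for instance, if Admirable ends up labeling $\{v_1,v_4,v_6\}$, the string $011010$ is neither sorted nor alternating, yet has $t=4$ and discrepancy $3$. These sets are not peripheral: after Admirable's natural opening move $v_1$, two of the four remaining bad sets ($\{v_1,v_4,v_6\}$ and $\{v_1,v_3,v_6\}$) are of this $t=4$ type, and the paper's strategy (never play $v_3$; play $v_2$ or $v_5$ on the second move) is built precisely around them. Since you never exhibit the $P_6$ strategy---you only promise to ``exhibit'' and ``verify'' one against each reply---the claim $c_g(P_6)=c'_g(P_6)=c^*_g(P_6)=1$ is not established by the proposal as written, and any verification run from your stated list of obstructions would be checking the wrong winning condition.

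Two smaller points. First, your added claim that Impish can force $d\ge 2$ on $P_5$ (not needed for the lemma, which asserts only an upper bound of $2$) is in fact true, but the suggested strategy of Impish ``first claiming the center'' fails in at least one line: if Admirable opens with $v_1$ and Impish replies $v_3$, then Admirable plays $v_5$, and Impish's forced remaining move ($v_2$ or $v_4$) completes one of the discrepancy-$0$ partitions $\{v_2,v_3\}$ or $\{v_3,v_4\}$ against him; the correct reply to $v_1$ is $v_2$. Second, your use of $c_g'\le c_g^*$ for the pass variant transfers lower bounds correctly, but the matching upper bounds on $c_g^*$ again hinge on the explicit $P_6$ (and $P_5$, $P_4$) strategies surviving an inserted pass, which is exactly the content left unproven.
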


\begin{proof}
    Let $P_n = v_1 v_2 \cdots v_n$ and let $S,\overline{S}$ be a balanced bipartition of $V(P_n)$.
    
    In the case $n = 3$, \sout{$|\discrepancy{S}|$} \track{$\discrepancy{S}$} is always even. As $|E(P_3)| = 2$, $\{v_2\},\{v_1,v_3\}$ is the only bipartition that achieves discrepancy greater than 0. Thus if Admirable plays $v_2$ first, this guarantees a bipartition of discrepancy zero. In the case Impish plays first, Admirable just needs to avoid playing $v_2$. Thus we may conclude $c_g(P_3) = c'_g(P_3) = c_g^*(P_3) =0$.

    When $n = 4$, \sout{$|\discrepancy{S}|$} \track{$\discrepancy{S}$} is always odd. By observation, $\{v_1,v_3\},\{v_2,v_4\}$ is the only balanced bipartition that achieves discrepancy greater than 1. Clearly if Admirable has a strategy of playing once from each set, regardless of who starts, this guarantees a resulting bipartition of discrepancy 1. Note as \sout{$|\discrepancy{S}| \ge 0$} \track{$\discrepancy{S} \ge 0$} and odd, we may conclude $c_g(P_4) = c_g'(P_4) = c_g^*(P_4) = 1$.

    In the case $n = 5$, \sout{$|\discrepancy{S}|$} \track{$\discrepancy{S}$} is always even. As $|E(P_5)| = 4$, the only balanced bipartition that achieves discrepancy greater than 2 is $\{v_1,v_3,v_5\},\{v_2,v_4\}$. Again if Admirable takes a strategy of playing once from each set, this is a strategy guaranteeing $c_g(P_5) = c'_g(P_5) = c^*_g(P_5) \le 2$.

    We now work towards understanding our game variants on $P_6$. By parity, \sout{$|\discrepancy{S}|$} \track{$\discrepancy{S}$} is odd and \sout{$|\discrepancy{S}| \ge 1$} \track{$\discrepancy{S} \ge 1$} for any balanced bipartition $S$,$\overline{S}$. There are only two choices for $S$ that achieve discrepancy 5, namely $\{v_1,v_3,v_5\}$ and its complement $\{v_2,v_4,v_6\}$. We now observe the balanced bipartitions which achieve discrepancy 3. To do so, we would either need exactly 1 or 4 edges crossing the cut induced by the bipartition. The former case only occurs with $S$ being $\{v_1,v_2,v_3\}$ or $\{v_4,v_5,v_6\}$. The latter case occurs only with $S$ being $\{v_2,v_3,v_5\}, \{v_2,v_4,v_5\}, \{v_1,v_4,v_6\}$, or $\{v_1,v_3,v_6\}$. Note then for Admirable to obtain discrepancy 1 in the cordiality game, there is in total 8 ``bad labelings" to avoid. \track{We list them as follows.
    \begin{align*}
         &\{v_1,v_3,v_5\}, \;\;\; \{v_2,v_4,v_6\},\:\:\; \{v_1,v_2,v_3\}, \;\;\;\; \{v_4,v_5,v_6\},\\
        &\{v_2,v_3,v_5\}, \;\;\; \{v_2,v_4,v_5\}, \;\;\; \{v_1,v_4,v_6\}, \;\;\; \{v_1,v_3,v_6\}.
    \end{align*}
    }
    We now give a strategy for Admirable which guarantees a balanced cut of discrepancy 1, i.e., $c_g(P_6) = 1$. We may assume Admirable plays on $v_1$ first. Note then there is only four bad labelings for Admirable to avoid now, $\{v_1,v_3,v_5\}$, $\{v_1,v_2,v_3\}$, $\{v_1,v_4,v_6\}$, or $\{v_1,v_3,v_6\}$. Note $v_3$ is in three of these bad sets. As Admirable plays first and $|P_6|$ is even, we may suppose Admirable never plays $v_3$. Thus to avoid the final bad set $\{v_1,v_4,v_6\}$, on Admirable's second move, we may assume Admirable plays on either $v_2$ or $v_5$. This guarantees that Admirable labels a set $S$ such that \sout{$|\discrepancy{S}| = 1$} \track{$\discrepancy{S} = 1$}.

    We now give a strategy for Admirable in which Impish plays first. In the case where Impish is also allowed to pass \sout{their}\track{his} turn to Admirable once, the analysis is almost the same which we note at the end. By symmetry, we may suppose Impish plays $v_1$,$v_2$ or $v_3$. 

    If Impish plays on $\{v_1,v_3\}$ on \sout{their}\track{his} first move, then Admirable plays $\{v_1,v_3\}$ for \sout{their}\track{his} first move as well. Note then the only bad labeling for Admirable to avoid now is $\{v_1,v_4,v_6\}$ \sout{and}\track{or} $\{v_2,v_3,v_5\}$ depending on \sout{their}\track{his} first move choice. This can clearly be achieved. If Impish plays $v_2$, then Admirable plays $v_5$. Note then the only bad labelings to avoid are $\{v_4,v_5,v_6\}$ and $\{v_1,v_3,v_5\}$. To avoid these, if Impish plays on $\{v_4,v_6\}$ on \sout{their}\track{his} second move, then Admirable plays on $\{v_4,v_6\}$. In a similar manner, if Impish plays on $\{v_1,v_3\}$ on \sout{their}\track{his} second move, then Admirable plays on $\{v_1,v_3\}$. If Impish chooses to pass on \sout{their}\track{his} second turn, Admirable simply plays on $\{v_1,v_3\}$ on \sout{their}\track{his} second move and then $\{v_4,v_6\}$ on \sout{their}\track{his} third move. Thus we may conclude $c_g'(P_6)= c_g^*(P_6) = 1$. 

\end{proof}

We now prove an upper bound for all paths.

\begin{thm}\label{ub}
For any integer $n\ge 3$,
\[
  c_g(P_n) \le
  \begin{cases}
                                   \frac{n-3}{3} & n \equiv 0 \pmod 3\\
                                   \frac{n-1}{3} & n \equiv 1 \pmod 3\\
                                   \frac{n+1}{3} & n \equiv 2 \pmod 3\\
  \end{cases}
\]
\end{thm}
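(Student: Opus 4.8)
The plan is to prove the three bounds simultaneously, and for all three game variants $c_g$, $c_g'$, and $c_g^*$, by strong induction on $n$, reducing $P_n$ to $P_{n-3}$ one ``block'' at a time. The base cases $n \in \{3,4,5,6\}$ are supplied by Lemma~\ref{base}, which conveniently establishes the bound for all of $c_g$, $c_g'$, $c_g^*$ at once; since the reduction step $n \mapsto n-3$ preserves the residue of $n$ modulo $3$, these seed every residue class. For the bookkeeping it is cleanest to work with the signed quantity $\discrepancy{S} = 2\phi(S) - |E(P_n)|$, where $S$ is the set of $0$-labeled vertices: on a path $\phi(S)$ is exactly the number of edges joining oppositely labeled vertices, so $\discrepancy{S} = e_1 - e_0$ and the game objective is $|\discrepancy{S}|$.

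The engine of the induction is the observation that all three bounds increase by exactly $1$ under $n \mapsto n+3$. I would have Admirable fix the three vertices $v_1, v_2, v_3$ at one end of the path as a \emph{block}, together with the interface edge $v_3v_4$, and play a local response strategy guaranteeing that the three edges $v_1v_2$, $v_2v_3$, $v_3v_4$ contribute a net value of exactly $+1$ or $-1$ to $\discrepancy{S}$ — that is, these three edges are never made all equal (net $-3$) nor all transitions (net $+3$). Writing $D$ for the signed discrepancy of the full path and $D'$ for that of the induced sub-path on $v_4, \dots, v_n$, we then have $D = D' \pm 1$, so by the triangle inequality $|D| \le |D'| + 1$. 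Combined with the inductive bound $|D'| \le \mathrm{bound}(n-3) = \mathrm{bound}(n) - 1$ on $P_{n-3}$, this yields $|D| \le \mathrm{bound}(n)$, as required.

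The reason all three variants must be carried through the induction is turn-order bookkeeping. After Admirable finishes processing the block, the residual game is played on $P_{n-3} = v_4 \cdots v_n$, but whose move it is there depends on how the three block vertices were split between the players: a split in which Admirable spends two moves and Impish one inside the block leaves the residual game as the Impish-starts game $c_g'(P_{n-3})$, whereas a split that hands Admirable a free tempo produces exactly the pass variant $c_g^*(P_{n-3})$. Since $c_g'(G) \le c_g^*(G)$ and the induction bounds $c_g^*$, invoking the largest of the three on the sub-path is always safe; this is precisely why the two auxiliary variants are introduced. I would organize the inductive step as a short case analysis on Impish's replies to Admirable's opening move near the processed end (say $v_2$, with the symmetric half of the path reducing the number of cases), in each branch exhibiting Admirable's response that (i) keeps the block's net contribution in $\{-1,+1\}$ and (ii) identifies which variant governs the residual path.

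The main obstacle is that Impish need not cooperate by playing inside the designated block: Impish may play deep in the interior, forcing Admirable either to respond globally or to postpone completing the block. Handling this requires a response strategy robust to Impish's delays, and in particular a careful argument that the interface edge $v_3v_4$ — whose label depends on $v_4$, a vertex belonging to the residual game — can still be steered so that the block's net stays $\pm 1$ without spoiling the sub-game Admirable inherits. I expect the cleanest route is to let Admirable mirror Impish's move whenever Impish enters the block and otherwise make a ``holding'' move that both advances the block toward a net-$\pm 1$ configuration and transfers a well-defined turn parity to $P_{n-3}$; verifying that these holding moves never conflict, and that a few additional small cases (e.g.\ $n = 7, 8$) behave as the generic step predicts, is where the real work lies.
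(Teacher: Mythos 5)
Your proposal has a genuine gap, and it is exactly the one the paper goes out of its way to avoid: the block you peel off has \emph{odd} order. With a three-vertex block $\{v_1,v_2,v_3\}$, the two players necessarily split it $2$--$1$, and Impish can choose which side of that split he takes. Your turn-order bookkeeping covers only two of the outcomes: Admirable spending two moves in the block (residual game is $c_g'(P_{n-3})$) and Admirable receiving a free tempo (residual game is $c_g^*(P_{n-3})$). But Impish can force the third outcome: he spends \emph{two} moves in the block to Admirable's one, whereupon the residual game on $P_{n-3}$ is an Admirable-starts game in which \emph{Impish} effectively passes once, i.e.\ Admirable is the one who moves twice consecutively at a moment of Impish's choosing. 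Since passing is a weapon for Impish (this is why $c_g'(G)\le c_g^*(G)$ in the paper), this fourth variant can be strictly worse for Admirable than $c_g$, and it is bounded nowhere --- not in Lemma~\ref{base}, not in your induction, which carries only $c_g$, $c_g'$, $c_g^*$. Concretely, Impish plays the middle vertex $v_2$ as a pure tempo move: your mirroring constraint (Admirable must take exactly one of $\{v_1,v_3\}$ to keep the two internal block edges from agreeing) leaves $v_2$ as a free vertex that steals parity from the residual path. A second, related gap is your plan to steer the interface edge $v_3v_4$ so the block nets exactly $\pm 1$: the label of $v_4$ is produced inside the sub-game, and the inductive hypothesis is only a bound on $|D'|$, not any control over which player labels $v_4$; so the ``all three edges including the interface'' accounting cannot be closed from the induction alone. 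You flagged both difficulties yourself, but they are not technicalities to be verified --- they are where the argument fails.

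The paper's proof is structured precisely to dodge this. It peels off a block of \emph{even} order, the last six vertices, so that every Impish incursion into the block can be answered inside the block and the parity of the main sub-path is never disturbed: the sub-game on $P_{n-6}$ remains a genuine Admirable-starts game (so only plain $c_g(P_{n-6})$ is needed inductively), while the three possible game types that arise on the block --- Admirable-starts, Impish-starts, and Impish-starts-with-one-pass --- are all verified directly for $P_6$ in Lemma~\ref{base}, each with value $1$. The interface edge $v_{n-6}v_{n-5}$ is simply charged as unknown, giving $c_g(P_n)\le c_g(P_{n-6})+2$, which matches the growth of the bound over six vertices. In short: the pass burden must land on the small, explicitly analyzed block, not on the inductive sub-path, and that forces the block to have even order. (Indeed, the paper remarks at the start of Section~3 that odd-order branches cause exactly the double-move pathology your scheme runs into.) To salvage a step of size $3$ you would have to introduce and bound the fourth variant (Admirable-starts where Impish may pass once) for all paths, which is a substantially stronger induction than anything Lemma~\ref{base} provides.
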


\begin{proof}

Let $P_n = v_1 v_2 \cdots v_n$. We now proceed by induction on $n$.  By Lemma \ref{base}, we may suppose that the theorem holds for all paths of order less than $n>6$ and consider $P_n$. We describe a strategy for Admirable which, regardless of the moves Impish makes, will produce the claimed upper bounds.

Let $P$ be the subpath of $P_n$ induced on $v_1,\dots, v_{n-6}$ and $P'$ be the $P_6$ subpath of $P$ induced by $v_{n-5}, \dots, v_{n}$. We will prove the inequality
\[
  c_g(P_n) \le
  \begin{cases}
                                   2\Floor{\frac{n}{6}}-1 & n \equiv 0 \pmod 6\\
                                   2\Floor{\frac{n}{6}} & n \equiv 1, 3 \pmod 6\\
                                   2\Floor{\frac{n}{6}}+1 & n \equiv 2, 4 \pmod 6\\
                                   2\Floor{\frac{n}{6}}+2 & n \equiv 5 \pmod 6\\
  \end{cases}
\]
which can be easily reduced to the claimed statement.

\medskip

Suppose $n$ is odd. Admirable begins by playing an optimal game on $P$. From that point, Admirable follows Impish between $P$ and $P'$. If Impish plays on $P$, Admirable continues playing an optimal game on $P$. If Impish plays on $P'$, then Admirable follows with a next optimal move on $P'$. This strategy results in an Impish-starts game on $P'$ and an Admirable-starts game on $P$. Recall by Lemma \ref{base}, $c'_g(P') = c'_g(P_6) = 1$. We apply the induction hypothesis on $P$ and consider that the edge between $P$ and $P'$, $v_{n-6}v_{n-5}$, as an unknown label. Since $n-6 \pmod 6 = n \pmod 6$, the values in the proposed upper bound of $c_g(P_n)$ follow\track{s} by adding $2$ to the bounds for $c_g(P_{n-6})$.

\medskip

Next, we suppose that $n$ is even. The strategy for Admirable is the same as the previous case. Admirable begins by playing an optimal game on $P$. From that point, Admirable follows Impish between $P$ and $P'$. If Impish plays on $P$, Admirable continues playing an optimal game on $P$. If Impish plays on $P'$, Admirable follows with a next optimal move on $P'$. Due to the parity of $n$ in this case, this strategy allows for several variants of the cordiality game. If Impish completes a game on $P$ before moving to $P'$, then the game on $P'$ becomes an Admirable-starts game. If Impish plays $P'$ before completing the game on $P$, then the game on $P'$ becomes an Impish-starts game. If Impish does not finish the game on $P'$, returns to $P$, and completes the game there, then Admirable must make the next move on $P'$, which is equivalent to playing an Impish-starts game on $P'$ where Impish may pass once. Regardless of Impish strategy, by Lemma \ref{base}, $c_g(P_6) = c'_g(P_6) = c^*_g(P_6) = 1$. Thus by induction on $P$, and treating $v_{n-6}v_{n-5}$ as an unknown label, this produces the claimed upper bounds for $c_g(P_n)$ as before.
\end{proof}

For a lower bound, our methods are ineffective, and so from some empirical considerations ($n\le 12$), we ask the following speculative question.

\begin{openquestion}\label{lbc}
Does $c_g(P_n)$ assume the values $0$ and $1$  for infinitely many integers $n$?  
\end{openquestion}

\section{Trees}

 \sout{ Our main difficulty in extending the upper bound present in Theorem \ref{ub} to general trees is the following. If we follow the method used for paths by separating a small branch from the tree to apply induction, that branch may be of odd order.} \track{In Theorem \ref{ub}, if our input path $P_n$ is long, i.e. $n \ge 6$, we break the instance into two smaller paths, $P_{n-6}$ and $P_6$, and then induct with a strategy of Admirable following Impish. There are challenges to extending this proof strategy to an arbitrary tree $T$. A subtree $B$ of a tree $T$ is a \emph{branch}, if there exists a subtree $T'$ of $T$ such that $B\cup T'=T$ and $|V(T')\cap V(B)|=1$. Ideally, we would like to find a small branch of constant size to break the instance into two smaller trees. If we do so naively,  we no longer have control on what this branch looks like, in particular, the branch may be of odd order.} Using the strategy of Admirable following Impish, Impish may play the odd ordered branch entirely with some vertices still unlabeled in the remainder of the tree, which would lead to Admirable playing twice in-a-row on the remainder of the tree. Thus, to avoid this potential difficulty in analysis, we consider how to remove a branch that is small enough to label, yet also of even order.

\begin{thm}\label{tree}
For any tree \track{$T$} of order $n$,
\[c_g(T)\le \frac{1}{2}n.\] 
\end{thm}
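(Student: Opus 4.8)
The plan is to mimic the strategy of Theorem \ref{ub}: argue by strong induction on $n$, peel off a small \emph{even-order} branch $B$, and have Admirable play an optimal game on the remaining tree $T'$ while \emph{following} Impish on $B$. The engine is additivity of discrepancy. If $B$ is a branch meeting $T'$ in the single vertex $r$, then $E(T)$ is the disjoint union $E(T')\sqcup E(B)$, so for every vertex set $S$ we have $\discrepancy{S}=\discrepancy{S\cap V(T')}+\discrepancy{S\cap V(B)}$, and hence the final discrepancy is at most what Admirable secures on $T'$ plus what it secures on $B$. As in Theorem \ref{ub}, Admirable opens on $T'$ and keeps it an Admirable-starts game, so the inductive bound $c_g(T')\le\tfrac12|V(T')|$ applies there, while $B$ absorbs whatever turn-parity variant arises. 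This is exactly why I would want branches $B$ with $c_g(B)=c_g'(B)=c_g^*(B)$ all small, playing the role that $P_6$ plays for paths.

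The parity of $n$ dictates the branch. When $n$ is even I would peel a single pendant edge, i.e. the order-$2$ branch $B=\{r,\ell\}$ with $\ell$ a leaf; here $|\discrepancy{B}|=1$ automatically, and since $T'=T-\ell$ has order $n-1$ the induction gives $c_g(T')\le\tfrac{n-1}{2}$, whence $c_g(T')\le\tfrac{n-2}{2}$ because $c_g(T')$ is an integer, so that $c_g(T)\le\tfrac n2$ (one checks Admirable is never forced to label $\ell$ on a bad tempo, so $T'$ genuinely stays an Admirable-starts game). When $n$ is odd a single edge overshoots by $1$, so instead I would peel an order-$4$ branch. Every tree on four vertices is $P_4$ or $K_{1,3}$, and by Lemma \ref{base} together with a direct check $c_g=c_g'=c_g^*=1$ for each; with $T'$ of order $n-3$ the induction and additivity then give $c_g(T)\le\tfrac{n-3}{2}+1=\tfrac{n-1}{2}\le\tfrac n2$.

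Two points require real work. The first is existence of the peeled branch. Order-$2$ branches always exist, so the even case is safe, but order-$4$ branches need not exist: taking a longest path $x_0x_1\cdots$, if $x_1$ has at least two leaf-neighbours one finds a $K_{1,3}$ branch at $x_1$ or an order-$4$ branch at $x_2$, while if $x_1$ has degree two the leg $x_0x_1$ has length two and the analysis moves to $x_2$; iterating shows an order-$4$ branch exists \emph{unless} every pendant branch of $T$ is a path of length two, i.e. $T$ is a ``$2$-legged'' tree such as a spider with all legs of length two. Such trees genuinely have no order-$4$ branch, so I would treat them directly: Admirable seizes the backbone and the inner leg-vertices, forcing Impish's $1$'s onto the leaf ends. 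For the spider this makes the discrepancy equal to $2\,|n_{10}-n_{00}|$, where $n_{10}$ (resp. $n_{00}$) counts legs whose outer vertex Admirable labels $0$ and whose inner vertex is labeled $1$ (resp. $0$); racing Impish for the inner vertices keeps this difference $O(1)$, far below $\tfrac n2$.

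The second, and I expect the main, obstacle is the shared root $r$, which the path argument never had to face. Because $r$ lies in both $T'$ and $B$, its label is fixed by the $T'$-play and fed into $\discrepancy{B}$ as an external boundary condition, so one cannot simply quote $c_g(B)=1$. The resolution I would push is that Admirable's following strategy forces both players to label at least one vertex of $B\setminus\{r\}$ (alternation prevents either player from sweeping the three free vertices), which for $K_{1,3}$ with $r$ the centre immediately breaks the monochromatic extreme and pins $|\discrepancy{B}|=1$ regardless of $r$'s label, and for $P_4$ prevents the alternating colouring; carrying this through each position of $r$ and each of the three turn-parity variants is the delicate computation. Finally I would discharge the base cases ($n\le 6$) using Lemma \ref{base} together with the handful of small non-path trees.
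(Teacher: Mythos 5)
Your skeleton (peel a small branch, have Admirable play an Admirable-start game on $T'$ and follow Impish into the branch, then add discrepancies across the cut vertex) is the paper's skeleton, and your diagnosis that the root's label is an external boundary condition is correct. But your parity choice is backwards, and it creates a genuine gap. You peel a branch $B$ of \emph{even} order including the root $r$, so the set of vertices actually played inside the branch, $B - r$, has \emph{odd} size: one vertex in your even case, three in your odd case. The following strategy then breaks: Impish can take the last free branch vertex at a moment of his choosing, after which Admirable has no reply inside the branch and must move twice in a row on $T'$ --- i.e., the game on $T'$ becomes an ``Impish passes once'' variant, which your inductive hypothesis (which bounds only $c_g(T')$ for the standard alternating balanced game) does not cover. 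Your parenthetical ``one checks Admirable is never forced to label $\ell$ on a bad tempo'' is exactly this unproven step, and the issue is not Admirable being forced to take $\ell$ but Impish taking it early. The odd case is worse: the three branch vertices split $2$--$1$, and if Impish gets two of them the $T'$-game ends with an \emph{unbalanced} bipartition (Admirable over quota), while if Admirable insists on taking two he must at some point move spontaneously in the branch, handing Impish a double move on $T'$. This is precisely the pitfall the paper describes in the paragraph before Theorem \ref{tree} (``Impish may play the odd ordered branch entirely with some vertices still unlabeled in the remainder\dots''), and it is why the paper's branches are chosen so that $S = B - v$ has \emph{even} order ($2$, $4$, or $6$ free vertices, from branches of orders $3$, $5$, $7$): then every Impish move into $S$ can be answered inside $S$, the $T'$-game stays a genuine balanced Admirable-start game, and all pass/turn-parity pathologies are pushed onto the constant-size branch, where the values $c_g = c_g' = c_g^*$ are verified by explicit finite strategies. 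Repairing your version would require strengthening the induction to bound the Impish-pass (and unbalanced) variants on arbitrary trees, which you neither state nor prove.

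Two secondary gaps. First, your exceptional family is larger than you claim: trees with no order-$4$ branch are not just spiders with length-two legs but include, e.g., a long path with pendant $P_3$-legs attached at several internal vertices; your direct treatment is sketched only for the spider, and even there ``racing Impish for the inner vertices keeps this difference $O(1)$'' is an assertion, not a strategy. The paper absorbs exactly these trees via its centered-$P_5$ and centered-$P_7$ branches (Cases \ref{Case:3} and \ref{Case:6}), whose even free parts again avoid the tempo problem, and proves existence cleanly by suppressing degree-two vertices and inspecting the path components of $T-u$. Second, the ``delicate computation'' you defer --- the branch game for each position of $r$ and each of the three turn variants, with the root's label unknown --- is not a detail but the bulk of the paper's proof (its seven cases, each bounding the branch edges' discrepancy by at most $2$ irrespective of $v$'s label). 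As written, the proposal identifies the right difficulties but resolves neither the tempo/balance problem nor the branch analysis, so it does not yet constitute a proof.
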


\begin{proof}
We proceed by induction on the order $n$. Note that the theorem holds trivially for trees of order $1,2$, and $3$, so we suppose the statement is true for all trees of order less than $n > 3$ and consider a tree $T$ of order $n$.

\medskip

We describe Admirable's strategies and calculate discrepancies for various possible branches of $T$. We will then \track{show} that our bound holds in every instance. \sout{In this process, we define $B$, a subgraph of $T$ as a \emph{branch}, if $T'$ is a subgraph of $T$ so that $B\cup T'=T$ and $|V(T')\cap V(B)|=1$.} \track{Let $B$ be a branch of $T$, let $T'$ be a subtree of $T$ such that $B\cup T'=T$ and $|V(T')\cap V(B)|=1$, and} let $v$ be the vertex which is both in $B$ and $T'$. \track{Finally, we let $S$ be the subgraph of $T$ induced by the vertices of $B - v$.}

\track{Throughout the proof, we will assume Admirable begins playing an optimal game on $T'$, and then following Impish between $T'$ and $S$ depending on where Impish plays. In the case Impish played on $T'$, and $T'$ has no remaining unlabeled vertices, Admirable will begin playing on $S$ if possible. We also will choose $B$ such that $|S|$ is always even. We remark that as $|T'| + |S| = |T|$, we have
\begin{itemize}
    \item If $|T'|$ is even, then the game on $S$ is either an Admirable-start game or an Impish-start game where Impish may pass up to one time during play.
    \item If $|T'|$ is odd, then the game on $S$ is an Impish-start game. 
\end{itemize}
Note in the case $|T'|$ is even, $S$ is an Admirable-start game if and only if every vertex of $T'$ has been previously played.}

\medskip
\setcounter{case}{0}
\begin{case}\label{Case:1}
Branch $B$ isomorphic to $P_5$ with end-vertex $v$ in $T'$
\end{case}

If $T$ contains a branch isomorphic to $P_5=(v,v_1,v_2,v_3,v_4)$, where $\deg(v_i)=2$ for $i=1,2,3$ and $\deg(v_4)=1$, in $T$, then we define $T'=T-\{v_1,\dots, v_4\}$ and let $S$ be the subgraph of $T$ induced by $v_1,v_2,v_3,v_4$. \sout{ Admirable begins by playing an optimal game on $T'$ and then follows Impish between $T'$ and $S$, depending on where Impish plays.}By Lemma \ref{base}, \sout{ $c'_g(P_4)=c^*_g(P_4)=1$}\track{$c_g(P_4) = c'_g(P_4) = c^*_g(P_4) = 1$}, we can apply the induction hypothesis to $T'$ and consider the edge between $T'$ and $S$ as one with an unknown label, to produce the bound
\[c_g(T)\le c_g(T')+2\le \frac{1}{2}(n-4)+2=\frac{1}{2}n.\]{\hfill \rule{4pt}{7pt}} 

\begin{case}\label{Case:2}
Branch $B$ isomorphic to $P_3$ with central vertex $v$ in $T'$.
\end{case}

\begin{figure}[ht]
\begin{center}
\begin{tikzpicture}[scale=.75]
\tikzstyle{vert}=[circle,fill=black,inner sep=3pt]
\tikzstyle{overt}=[circle, draw, inner sep=3pt]

\node[overt, label=above:\tiny{$v$}] (v) at (1,1) {};
\node[overt, label=above:\tiny{$v_1$}] (v_1) at (2,1.5) {};
\node[overt, label=below:\tiny{$v_2$}] (v_2) at (2,.5) {};

\draw[color=black] 
(v)--(v_1) (v)--(v_2);

\filldraw[color=black]
    (.5,1) circle (1pt)
    (0,1) circle (1pt)
    (-.5,1) circle (1pt)
;

\end{tikzpicture}
\end{center}
\end{figure}

Let the vertices of $B$ be labeled as the above figure, and let $S$ be the subgraph induced by $\{v_1,v_2\}$. \sout{Admirable begins play on $T'$. If Impish plays on $T'$, Admirable follows Impish and plays on $T'$. If at any point, Impish plays on $S$, Admirable plays the next move on $S$ as well. Note if $|T'|$ is odd, it may occur that Admirable must play on $S$ before Impish. In this case, Impish is forced to play the last available vertex, which belongs to \sout{$P$}. }\track{As Admirable plays first on $T'$, and follows Impish between $T'$ and $S$, playing on $S$ first if and only if $T'$ is completely labeled,} \sout{In all cases,}the edges $vv_1$ and $vv_2$ \track{will always} have different labels. Thus the claim follows by induction on $T'$. {\hfill \rule{4pt}{7pt}}

\begin{case}\label{Case:3}
Branch $B$ isomorphic to $P_5=\{v_1,v_2,v,v_3,v_4\}$ with vertex $v$ in $T'$.
\end{case}
\begin{figure}[ht]
\begin{center}
\begin{tikzpicture}[scale=.75]
\tikzstyle{vert}=[circle,fill=black,inner sep=3pt]
\tikzstyle{overt}=[circle, draw, inner sep=3pt]

\node[overt, label=above:\tiny{$v$}] (v) at (1,1) {};
\node[overt, label=above:\tiny{$v_2$}] (v_2) at (2,1.5) {};
\node[overt, label=above:\tiny{$v_1$}] (v_1) at (3,1.5) {};
\node[overt, label=below:\tiny{$v_3$}] (v_3) at (2,.5) {};
\node[overt, label=below:\tiny{$v_4$}] (v_4) at (3,.5) {};

\draw[color=black] 
(v)--(v_2)--(v_1) (v)--(v_3)--(v_4);

\filldraw[color=black]
    (.5,1) circle (1pt)
    (0,1) circle (1pt)
    (-.5,1) circle (1pt)
;

\end{tikzpicture}
\end{center}
\end{figure}

Let \track{$B$ be labeled as the above figure and let} $S$ be the subgraph induced by $\{v_1,v_2,v_3,v_4\}$. We now give strategies for Admirable \track{on $S$} that produces a discrepancy of \track{at most} $2$ for the \track{edges induced by the} branch $B$, regardless of the value of the label on $v$. Note this occurs if both Admirable and Impish have played on $\{v_2,v_3\}$. To see this, $vv_2$ and $vv_3$ must have different signs. Thus we have discrepancy at most $2$ from edges $v_1v_2$ and $v_3v_4$.

We begin with the Admirable-start\sout{s} game\track{s}. We may suppose Admirable plays $v_2$, and then plays on $\{v_1,v_4\}$. This guarantees both players played on $\{v_2,v_3\}$.

For \track{an} Impish-starts game, \sout{I}\track{i}f Impish plays on $\{v_2,v_3\}$, then Admirable immediately follows by playing on $\{v_2,v_3\}$ as well. If Impish plays on $\{v_1,v_4\}$, then Admirable again immediately follows by playing on $\{v_1,v_4\}$ as well. Note this strategy is robust against the case where Impish is also allowed to pass \sout{their}\track{his} turn. {\hfill \rule{4pt}{7pt}}

\medskip

\begin{case}\label{Case:4}
Branch $B$ is illustrated below with vertex $v$ in $T'$.
\end{case}

\begin{figure}[ht]
\begin{center}
\begin{tikzpicture}[scale=.75]
\tikzstyle{vert}=[circle,fill=black,inner sep=3pt]
\tikzstyle{overt}=[circle, draw, inner sep=3pt]

\node[overt, label=above:\tiny{$v$}] (v) at (1,1) {};
\node[overt, label=above:\tiny{$v_1$}] (v_1) at (2,1) {};
\node[overt, label=above:\tiny{$v_2$}] (v_2) at (3,1.5) {};
\node[overt, label=above:\tiny{$v_3$}] (v_3) at (4,1.5) {};
\node[overt, label=below:\tiny{$v_4$}] (v_4) at (3,.5) {};

\draw[color=black] 
(v)--(v_1)--(v_2)--(v_3) (v_1)--(v_4);

\filldraw[color=black]
    (.5,1) circle (1pt)
    (0,1) circle (1pt)
    (-.5,1) circle (1pt)
;

\end{tikzpicture}
\end{center}
\end{figure}

Let \track{the vertices of $B$ be labeled as in the figure, and let} $S$ be the subgraph induced by $\{v_1,v_2,v_3,v_4\}$. \sout{We show the move-order sequences that produces a discrepancy of $2$ for $B$, regardless of the value of the label on $v$.}\track{ We give a strategy for Admirable which produces a discrepancy of at most $2$ for the edges induced by $B$, regardless of the value of the label on $v$.}\sout{$S$ that}  Note that $v_4v_1v_2v_3$ induce\track{s} a path, thus it is sufficient to bound the discrepancy of the edges $v_4v_1, v_1v_2, v_2v_3$ by one. This is true by Lemma \ref{base}. {\hfill \rule{4pt}{7pt}}

\begin{case}\label{Case:5}
Branch $B$ isomorphic to $P_5=\{v_1,v,v_2,v_3,v_4\}$ with vertex $v$ in $T'$.
\end{case}
\begin{figure}[ht]
\begin{center}
\begin{tikzpicture}[scale=.75]
\tikzstyle{vert}=[circle,fill=black,inner sep=3pt]
\tikzstyle{overt}=[circle, draw, inner sep=3pt]

\node[overt, label=above:\tiny{$v$}] (v) at (1,1) {};
\node[overt, label=above:\tiny{$v_1$}] (v_1) at (2,1.5) {};
\node[overt, label=below:\tiny{$v_2$}] (v_2) at (2,.5) {};
\node[overt, label=below:\tiny{$v_3$}] (v_3) at (3,.5) {};
\node[overt, label=below:\tiny{$v_4$}] (v_4) at (4,.5) {};

\draw[color=black] 
(v)--(v_1) (v)--(v_2)--(v_3)--(v_4);

\filldraw[color=black]
    (.5,1) circle (1pt)
    (0,1) circle (1pt)
    (-.5,1) circle (1pt)
;

\end{tikzpicture}
\end{center}
\end{figure}

Let \track{the vertices of $B$ be labeled as the figure above, and let} $S$ be the subgraph induced by $\{v_1,v_2,v_3,v_4\}$. \sout{We show the move-order sequences}\track{As in the prior case, we give a strategy for Admirable on $S$} that produces a discrepancy of \track{at most} $2$ for $B$, regardless of the value of the label on $v$. Note this occurs if Admirable plays once on $S_1 = \{v_1,v_2\}$ and once on $S_2 = \{v_3,v_4\}$. 

For Admirable-start games, we first play $v_1$ and then Admirable on \sout{their}\track{his} second move plays on $\{v_3,v_4\}$. When Impish plays first, if Impish plays on $S_i$ for $i \in \{1,2\}$, then Admirable follows by playing on $S_i$ as well. In the case Impish is allowed to pass and does so on \sout{their}\track{his} second move, Admirable plays in a way so that he plays on both $S_1$ and $S_2$. {\hfill \rule{4pt}{7pt}}


\begin{case}\label{Case:6}
Branch $B$ isomorphic to $P_7=\{v_1,v_2,v_3,v,v_4,v_5,v_6\}$ with vertex $v$ in $T'$.
\end{case}

\begin{figure}[ht]
\begin{center}
\begin{tikzpicture}[scale=.75]
\tikzstyle{vert}=[circle,fill=black,inner sep=3pt]
\tikzstyle{overt}=[circle, draw, inner sep=3pt]

\node[overt, label=above:\tiny{$v$}] (v) at (1,1) {};
\node[overt, label=above:\tiny{$v_1$}] (v_1) at (2,1.5) {};
\node[overt, label=above:\tiny{$v_2$}] (v_2) at (3,1.5) {};
\node[overt, label=above:\tiny{$v_3$}] (v_3) at (4,1.5) {};
\node[overt, label=below:\tiny{$v_4$}] (v_4) at (2,.5) {};
\node[overt, label=below:\tiny{$v_5$}] (v_5) at (3,.5) {};
\node[overt, label=below:\tiny{$v_6$}] (v_6) at (4,.5) {};

\draw[color=black] 
(v)--(v_1)--(v_2)--(v_3) (v)--(v_4)--(v_5)--(v_6);

\filldraw[color=black]
    (.5,1) circle (1pt)
    (0,1) circle (1pt)
    (-.5,1) circle (1pt)
;

\end{tikzpicture}
\end{center}
\end{figure}

Let \track{the vertices of $B$ be labeled as above and} $S$ be the subgraph induced by $\{v_1,v_2,v_3,v_4,v_5,v_6\}$. \sout{We show the move-order sequences}\track{Again, we give a strategy for Admirable on $S$} that produces a discrepancy of \track{at most} $2$ for the branch $B$, regardless of label on $v$. \sout{To achieve this, there are two strategies available for Admirable. In Strategy 1, Admirable will always guarantee both Admirable and Impish play on $\{v_1,v_4\}$. Conditioned on this constraint, to guarantee our desired discrepancy, it is enough for Admirable to avoid labeling the following ``bad" sets $\{v_1,v_3,v_5\}$, $\{v_2,v_4,v_6\}$, $\{v_1,v_2,v_3\}$, and $\{v_4,v_5,v_6\}$ as these are the sets inducing cuts of discrepancy $4$. In Strategy 2, either only Admirable or Impish played on $\{v_1,v_4\}$. Note under this assumption, Admirable must guarantee the edges induced by $S$ have discrepancy zero. For this, it is enough to note $\{v_1,v_2,v_4\}$, $\{v_1,v_4,v_5\}$, $\{v_2,v_3,v_6\}$, and $\{v_3,v_5,v_6\}$ are sets in which if Admirable labels, then Admirable wins under this Strategy.} \track{To achieve this, we describe the possible winning labelings for Admirable. In the first case, suppose both Admirable and Impish play on $\{v_1,v_4\}$. Conditioned on this constraint, by inspection, the sets inducing cuts of discrepancy at most $2$ on $S$ are the following,
   \begin{align*}
         &\{v_1,v_2,v_5\},\;\;\; \{v_1,v_2,v_6\},\;\;\; \{v_1,v_3,v_6\}, \;\;\; \{v_1,v_5,v_6\},\\
         &\{v_2,v_3,v_4\}, \;\;\; \{v_2,v_4,v_5\},  \;\;\; \{v_3,v_4,v_5\}, \;\;\; \{v_3,v_4,v_6\}.
    \end{align*}
As the edges $vv_1$ and $vv_4$ must have different labelings, if Admirable labels one of the above sets, this would be considered a winning labeling. Now consider the other case, either only Admirable or Impish play on $\{v_1,v_4\}$. Under this assumption, Admirable must guarantee the edges induced by $S$ have discrepancy $0$ under his labeling. By inspection, the following are such labelings, 
   \begin{align*}
         &\{v_1,v_2,v_4\}, \;\;\; \{v_1,v_4,v_5\}, \:\:\; \{v_2,v_3,v_6\}, \;\;\;\; \{v_3,v_5,v_6\}.
    \end{align*}
Thus, in all cases, we will give a strategy for Admirable for playing on $S$ that guarantees one of the above twelve labelings.}

\sout{For Admirable-start game, Admirable starts by playing $v_1$.  First suppose Impish does not play $v_6$ on \sout{their}  first move. On Admirable's second move, \sout{they play} $v_6$. For Admirable's third move, \sout{they avoid}  playing $v_4$. As Admirable played first and $|S|$ is even, this is always possible. \sout{This would result in a win by Admirable under Strategy 1 as Admirable avoided all bad sets while only playing on the set $\{v_1,v_4\}$ once.} Now suppose Impish plays on $v_6$ for \sout{their} first move. On Admirable's second move, \sout{they play} $v_2$. If Impish does not play $v_4$ on \sout{their}  second move (or \sout{ they pass}), Admirable would then play $v_4$, guaranteeing a win under Strategy 2. If Impish plays on $v_4$ in \sout{their} second move, Admirable would then play $v_5$, which would guarantee a win under Strategy 1 as $\{v_1,v_2,v_5\}$ is not a bad set under this strategy.}

\track{We first handle Admirable-start games on $S$. Admirable beings by playing $v_1$. First suppose Impish does not play $v_6$ on their first move. Then on Admirable's second move, he plays $v_6$. For Admirable third move, he avoids playing $v_4$. As Admirable played first and $|S|$ is even, this is always possible. The possible labelings for Admirable are 
\begin{align*}
    \{v_1,v_2,v_6\},  \;\;\; \{v_1,v_3,v_6\}, \;\;\; \{v_1,v_5,v_6\},
\end{align*} 
all of which are one of the possible twelve winning labelings. Now suppose Impish plays $v_6$ on his first move. On Admirable's second move, they play $v_2$. If Impish does not play $v_4$ on his second turn (or he passes), Admirable would then play $v_4$, Admirable would play $v_4$, obtaining the set $\{v_1,v_2,v_4\}$, a winning set. If Impish plays on $v_4$ on his second move, Admirable would then play $v_5$, which would guarantee the winning set $\{v_1,v_2,v_5\}$.
}

\sout{ For Impish-start games, by symmetry, we may assume Impish plays on $\{v_1,v_2,v_3\}$. First suppose Impish plays $v_1$. Admirable would then play $v_3$. If Impish does not play $v_4$ (or passes) for \sout{their} second move, Admirable plays $v_4$ for \sout{their} second move. Note then this guarantees a win for Admirable under Strategy 1. So suppose Impish plays $v_4$ on \sout{their} second move. From here Admirable would play $v_6$ on \sout{their}  second move. Irrespective of Admirable's third move, this guarantees a win under Strategy 2.}

\track{We now consider Impish-start games. By symmetry,  we may assume Impish plays on $\{v_1,v_2,v_3\}$. First suppose Impish plays $v_1$. Admirable would then play $v_3$. If Impish does not play $v_4$ (or passes) for his second move, Admirable plays $v_4$ for his second move. Regardless of Admirable's third move, he would obtain one of the following labelings,
\begin{align*}
    \{v_2,v_3,v_4\}, \;\;\; \{v_3,v_4,v_5\}, \;\;\; \{v_3,v_4,v_6\},
\end{align*}
all of which are winning. So suppose Impish plays $v_4$ on his second move. From here Admirable would play $v_6$ on his second move. Irrespective of Admirable's third move, he obtains one of the following winning labelings,
\begin{align*}
    \{v_2,v_3,v_6\}, \;\;\; \{v_3,v_5,v_6\}.
\end{align*}
}
\sout{
Now suppose Impish plays on $v_2$ for \sout{their} first move. Admirable would then follow by playing $v_5$. We break into cases depending on Impish's second move:
\begin{itemize}
    \item If Impish plays $v_6$ \sout{their} second move, Admirable would then play $v_4$. For Admirable's third move, if it is $v_3$, Admirable wins by Strategy 1. If Admirable third move is $v_1$, then Admirable's wins by Strategy 2. 
    \item If Impish passes or plays $v_4$ on \sout{their}  second move, Admirable would then play $v_6$ as \sout{their}  second move. Admirable's third move will be played on $\{v_1,v_3\}$, of which one will always be playable. Now if Admirable's third move was $v_1$, then Admirable wins by Strategy 1. If Admirable's third move was $v_3$, \sout{they win}  by Strategy 2.
    \item If Impish plays on $\{v_1,v_3\}$ on \sout{their}  second move, Admriable would then play $\{v_1,v_3\}$ as well. The final possible game states for Admirable are $\{v_1,v_4,v_5\}$, $\{v_1,v_5,v_6\}$, $\{v_3,v_4,v_5\}$, and $\{v_3,v_5,v_6\}$. Admirable wins under Strategy 1 if $\{v_1,v_5,v_6\}$ or $\{v_3,v_4,v_5\}$ are labeled, and wins with Strategy 2 otherwise.
\end{itemize}
}
\track{
Now suppose Impish plays $v_2$ on his first move. Admirable would then follow by playing $v_5$. Regardless of how Impish plays (or passes), Admirable can guarantee he labels exactly one vertex from $\{v_1,v_3\}$ and exactly one vertex from $\{v_4,v_6\}$. The possible labelings Admirable may obtain are
\begin{align*}
    \{v_1,v_4,v_5\} \;\;\; \{v_1,v_5,v_6\} \;\;\; \{v_3,v_4,v_5\} \;\;\; \{v_3,v_5,v_6\},
\end{align*}
all of which are winning.
}
\sout{ Now suppose Impish plays on $v_3$ for \sout{their}  first move, Admirable follows by playing $v_1$. If Impish does not play $v_5$ or passes on \sout{their}  second move, Admirable plays $v_5$. Suppose Admirable did not play $v_4$ on \sout{their}  third move, then \sout{they win} by Strategy 1. If Admirable plays $v_4$ on \sout{their}  third move, \sout{they played} $\{v_1,v_4,v_5\}$, i.e., \sout{they win} by Strategy 2. Thus we may suppose Impish plays $v_5$ on \sout{their}  second move. Then on Admirable's second move, \sout{they play}  $v_2$. If Admirable final game state is $\{v_1,v_2,v_6\}$, \sout{they win} by Strategy 1, if it is $\{v_1,v_2,v_4\}$, \sout{they win} by Strategy 2. {\hfill \rule{4pt}{7pt}}}

\track{Now suppose Impish plays on $v_3$ for his first move, Admirable would then follow by playing $v_1$. If Impish does not play $v_5$ or passes on their second move, Admirable plays $v_5$. The possible final labels for Admirable in this case would be
\begin{align*}
    \{v_1,v_2,v_5\} \;\;\; \{v_1,v_4,v_5\} \;\;\; \{v_1,v_5,v_6\}
\end{align*}
all of which are winning. If Impish plays $v_5$ on this second move, Admirable would then play $v_2$. The possible final game states for Admirable are
\begin{align*}
    \{v_1,v_2,v_6\} \;\;\; \{v_1,v_2,v_4\}
\end{align*}
which are both winning. {\hfill \rule{4pt}{7pt}}
}

\begin{case}\label{Case:7}
Branch $B$ is illustrated below with vertex $v$ in $T'$.
\end{case}

\begin{figure}[ht]
\begin{center}
\begin{tikzpicture}[scale=.75]
\tikzstyle{vert}=[circle,fill=black,inner sep=3pt]
\tikzstyle{overt}=[circle, draw, inner sep=3pt]

\node[overt, label=above:\tiny{$v$}] (v) at (1,1) {};
\node[overt, label=above:\tiny{$v_1$}] (v_1) at (2,1) {};
\node[overt, label=above:\tiny{$v_2$}] (v_2) at (3,1.5) {};
\node[overt, label=above:\tiny{$v_3$}] (v_3) at (4,1.5) {};
\node[overt, label=below:\tiny{$v_4$}] (v_4) at (3,.5) {};
\node[overt, label=below:\tiny{$v_5$}] (v_5) at (4,.5) {};
\node[overt, label=below:\tiny{$v_6$}] (v_6) at (5,.5) {};

\draw[color=black] 
(v)--(v_1)--(v_2)--(v_3) (v_1)--(v_4)--(v_5)--(v_6);

\filldraw[color=black]
    (.5,1) circle (1pt)
    (0,1) circle (1pt)
    (-.5,1) circle (1pt)
;

\end{tikzpicture}
\end{center}
\end{figure}

Let \track{the vertices of $B$ be labeled as pictured above and let} $S$ be the subgraph induced by $\{v_1,v_2,v_3,v_4,v_5,v_6\}$. Note that $S$ induces a subgraph isomorphic to $P_6$. By Lemma \ref{base}, the optimal discrepancy was shown to be $1$ for $P_6$, taking into the account the edge between $v$ and $v_1$ in $B$ produces a discrepancy of at most $2$. {\hfill \rule{4pt}{7pt}}

We now finish the proof by showing \track{ that} $T$ has a branch $B$ as described as in the prior seven cases. By Theorem \ref{ub}, we may assume $T$ has a vertex of degree at least three. Let $T'$ be the tree obtained from $T$ by suppressing all degree two vertices and let $T''$ be the tree obtained from $T'$ by deleting all leaves of $T'$. Let $u \in V(T'')$ be a leaf of $T''$. Note then $d_{T'}(u) = d_T(u) \ge 3$, as $u \in V(T')$ and $u$ was not a leaf of $T'$. \sout{Furthermore} \track{Thus,} $u$ must be adjacent to at least two leaves in $T'$, as \sout{$d_{T''}(u) = 1$} \track{$d_{T''}(u) \le 1$}. \track{Furthermore, if $|V(T'')| = 1$, we have that $d_{T''}(u) = 0$ and $u$ is adjacent to at least three leaves in $T'$.}

We now observe $u$ in $T$. Let $Q_1, \ldots, Q_k$ be the components of $T -u$ which are paths. \sout{ As \sout{$v$} \track{$u$} was adjacent to two leaves in $T'$, $k \ge 2$.}\sout{Furthermore, by our choice of $u$, $d_T(u) = k + 1$.}\track{If $|V(T'')| = 0$, then $u$ is adjacent to at least three leaves in $T'$, thus $d_T(u) = k \ge 3$. Otherwise, $|V(T'')| > 1$, $k \ge 2$, and by our choice of $u$, $d_T(u) = k + 1 \ge 3$.}

First suppose there exists $i$ such that $|Q_i| \ge 4$. Then there exists some $v \in V(Q_i) \cup \{u\}$ such that $v$ is the endpoint of a branch $B$ isomorphic to $P_5$. Thus by Case \ref{Case:1}, we may suppose $|Q_i| \le 3$ for all $i$. By Cases \ref{Case:2}, \ref{Case:3}, and \ref{Case:6}, for each $j \in \{1,2,3\}$, there can be at most one $i$ such that $|Q_i| = j$. By Case \ref{Case:5}, there cannot be two distinct $i,i'$ such that $|Q_i| = 1$ and $|Q_{i'}| = 3$. Note this implies $k = 2$, and without loss of generality, $|Q_1| = 2$ and either $|Q_2| = 1$ or $|Q_2| = 3$. Note this implies $d_T(u) = 3$ as well. Depending on the order of $Q_2$, either Case \ref{Case:4} or Case \ref{Case:7} is applicable, thus the claim follows.

\end{proof}

We do not believe that the upper bound in Theorem \ref{tree} is sharp. In fact, we posit that paths are the worst case for the \sout{Cordiality}\track{cordiality} game.

\begin{conj}
For any tree $T$ of order $n$, $c_g(T)\le c_g(P_n)$.
\end{conj}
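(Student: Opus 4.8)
The plan is to prove Theorem~\ref{tree} by strong induction on the order $n$ of the tree, mirroring the follow-the-opponent strategy used for paths but with a richer case analysis on the local structure near a carefully chosen vertex. The central idea is that Admirable fixes a small branch $B$ of \emph{even} order (so that $|S| = |B| - 1$ is even, where $S = B - v$), plays an optimal game on the remainder $T'$, and otherwise follows Impish between $T'$ and $S$. Because $|S|$ is even, the induced subgame on $S$ is always one of the three game types already controlled in Lemma~\ref{base} (Admirable-start, Impish-start, or Impish-start-with-one-pass), and the key numeric payoff is that each such branch contributes a discrepancy of at most $2$ while consuming an even number $|S|$ of vertices; since $2 \le \tfrac{1}{2}|S|$ once $|S| \ge 4$, and the boundary edge $vv_1$ (or the two edges $vv_2, vv_3$) is absorbed into the bound, the induction $c_g(T) \le c_g(T') + 2 \le \tfrac{1}{2}(n - |S|) + 2 \le \tfrac{1}{2}n$ closes.

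First I would set up the induction and dispose of the base cases $n \le 3$, then state the overarching strategy: Admirable begins optimally on $T'$ and thereafter plays reactively on whichever of $T'$ or $S$ Impish last touched, committing to $S$ only once $T'$ is exhausted. I would record the parity dichotomy explicitly --- that $|T'|$ even yields either an Admirable-start game on $S$ or an Impish-start-with-pass game, while $|T'|$ odd forces an Impish-start game on $S$ --- so that in each of the seven branch cases it suffices to exhibit Admirable strategies handling exactly those game types. The bulk of the argument is then the seven-case analysis (Cases~\ref{Case:1}--\ref{Case:7}), where for each admissible branch shape I would specify Admirable's response to every Impish move and verify that the resulting labeling of the edges of $B$ has discrepancy at most $2$, independent of how $v$ itself ends up labeled.

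The structural heart of the proof, which I would place last, is showing that \emph{every} tree $T$ with a vertex of degree $\ge 3$ must contain one of these seven branches. The plan here is to pass to $T'$, the homeomorphic reduction obtained by suppressing degree-two vertices, and then to $T''$, obtained by deleting the leaves of $T'$; a leaf $u$ of $T''$ (or the unique vertex when $|V(T'')| \le 1$) has degree $\ge 3$ in $T$ and is the hub of at least two pendant paths $Q_1, \dots, Q_k$. I would then argue combinatorially that if some $Q_i$ is long ($|Q_i| \ge 4$) we are in Case~\ref{Case:1}, and otherwise the short-path cases force $k = 2$ with $\{|Q_1|, |Q_2|\} = \{2, 1\}$ or $\{2, 3\}$, landing in Case~\ref{Case:4} or~\ref{Case:7}. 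Trees with maximum degree $\le 2$ are paths, already handled by Theorem~\ref{ub}.

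The main obstacle I anticipate is Case~\ref{Case:6}, the branch isomorphic to $P_7$ split symmetrically at $v$. Unlike the path subgraph cases (Cases~\ref{Case:4} and~\ref{Case:7}), where one simply invokes the $P_6$ bound from Lemma~\ref{base}, here the two edges $vv_1$ and $vv_4$ are guaranteed opposite labels only when both players commit to $\{v_1, v_4\}$, so Admirable must juggle two distinct winning regimes (whether or not $\{v_1, v_4\}$ is split between the players) and verify discrepancy-at-most-$2$ across a dozen or so admissible final labelings for each of the Admirable-start, Impish-start, and pass variants. I expect this exhaustive move-tree verification, and the bookkeeping needed to confirm the enumerated winning sets are genuinely reachable under the stated responses, to be the most delicate and error-prone step, and I would devote the most care to checking that Admirable's described responses are always legal (i.e. the intended vertices remain unlabeled) given that $|S|$ is even.
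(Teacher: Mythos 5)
Your proposal does not address the statement in question. The statement is the paper's closing \emph{conjecture}, $c_g(T)\le c_g(P_n)$, which the paper explicitly leaves open (``we posit that paths are the worst case''); no proof of it exists in the paper. What you have outlined instead is a proof of Theorem~\ref{tree}, the bound $c_g(T)\le \tfrac{1}{2}n$ --- and indeed your outline tracks the paper's proof of that theorem essentially step for step (even branch $B$ with $|S|=|B|-v$ even, follow-the-opponent between $T'$ and $S$, the parity dichotomy for which game variant arises on $S$, the seven branch cases with Case~\ref{Case:6} as the delicate one, and the $T'$/$T''$ leaf argument forcing one of the seven branches). As a reconstruction of Theorem~\ref{tree} it is sound, but it is a proof of the wrong statement.

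The gap is not repairable by tightening constants within your framework. To deduce $c_g(T)\le c_g(P_n)$ from $c_g(T)\le \tfrac{1}{2}n$ you would need a lower bound $c_g(P_n)\ge \tfrac{1}{2}n$, which is impossible: Theorem~\ref{ub} gives $c_g(P_n)\le \tfrac{n+1}{3}$, and the paper's Open Question~\ref{lbc} even asks whether $c_g(P_n)$ is $0$ or $1$ for infinitely many $n$. If that question has a positive answer, the conjecture would demand $c_g(T)=O(1)$ for \emph{every} tree of those orders, something no absolute upper bound of the form $cn$ can deliver. A genuine attack on the conjecture would have to compare the two games directly --- for instance, a strategy-transfer argument showing Admirable can simulate an optimal $P_n$-strategy on an arbitrary tree of the same order, or an exchange/induction argument showing discrepancy under optimal play only decreases as branching increases --- none of which appears in your proposal.
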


\section{The Balance Game}

While studying cordiality game strategies, it becomes evident that Admirable cannot simply greedily label edges 0 or 1, as Impish can decide to ``play into'' this strategy and further imbalance the edge labelings. As a curiosity, one may wonder what happens if each player instead adopts a greedy objective of trying to maximize the number of edges with a particular label. To consider this proposition, we introduce a new game.

Let $G$ be a graph with vertices $V$ and edges $E$. The \emph{balance game} is played in a similar fashion as the cordiality game, but with a modified discrepancy $d$. We remove the absolute value from our previous definition of $d$ and define the (signed) discrepancy of the balance game to be $d = e_1 - e_0$. The objectives of Admirable and Impish are still to minimize and maximize $d$, respectively. However, these objectives can now be interpreted as Admirable and Impish attempting to maximize the number of 0s and 1s, respectively, in the labeling of $E$. We define the \emph{game balance number}, $b_g(G)$, to be the value of $d$ when both players play optimally. As in the cordiality game, this game too can be interpreted as an instance of a Maker-Breaker game. 
For $k \ge 0$, let ${\mathcal F}_k$ be the family of vertex sets $S$ such that $S$ and $V\setminus S$ is a balanced partition of $V$ and $ 2\phi(S) - |E(G)| \le k$. Again, Maker plays the role of Admirable, and Breaker plays the role of Impish. 

Notice that for any graph $G$, Admirable can choose to play according to the optimal cordiality game strategy on $G$. This will produce a balance game discrepancy of $\pm c_g(G)$, depending on whether 0s or 1s form a majority of the edge labels. Hence, we have $b_g(G) \leq c_g(G)$. Our proof method for paths translates to the balance game as the following theorem illustrates. 

\begin{thm}\label{lb} 
For any integer $n\ge 2$, $b_g(P_n) \geq 0$.
\end{thm}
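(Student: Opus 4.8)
The plan is to exhibit a strategy for Impish---the player maximizing $d = e_1 - e_0$---that guarantees $d \ge 0$ against every Admirable play; since $b_g(P_n)$ is the value of the game under optimal play, such a strategy certifies $b_g(P_n) \ge 0$. First I would reformulate the objective. Writing $S$ for the set of vertices Impish labels $1$, an edge receives label $1$ exactly when its endpoints get different labels, so $e_1 = \phi(S)$ and $d = 2\phi(S) - |E(P_n)| = 2\phi(S) - (n-1)$. Thus it suffices to show Impish can force at least half of the edges to be bichromatic, i.e. $\phi(S) \ge (n-1)/2$.

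The strategy I would use is a pairing (matching) strategy. Fix the matching $M = \{v_1v_2, v_3v_4, \dots\}$ of $P_n$, which has $\floor{n/2}$ edges and, when $n$ is odd, leaves the single vertex $v_n$ uncovered. Impish responds to each Admirable move as follows: if Admirable plays a vertex whose $M$-partner is still unlabeled, Impish immediately labels that partner, forcing the corresponding matching edge to be bichromatic (one endpoint $0$, the other $1$). When $n$ is even this prescription is always defined---every Admirable move opens a fresh matching pair, which Impish then closes---so all $n/2$ matching edges end bichromatic, whence $e_1 \ge n/2 > (n-1)/2$ and $d \ge 1$.

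The only complication is the odd case, where Admirable makes one more move than Impish and the uncovered vertex $v_n$ has no partner. When Admirable labels $v_n$ (or, later, a vertex whose matching edge Impish has already half-claimed), Impish has no forced reply and instead makes a \emph{free move} into a fresh matching pair, half-claiming it. I would control this with the invariant that at the end of each full round at most one matching pair is half-claimed by Impish; this follows from a short count, since after each round the number of $0$-labels equals the number of $1$-labels, and under this strategy every Impish vertex lies in a split or a half-claimed pair, forcing the number of half-claimed pairs to equal the indicator that $v_n$ has been labeled $0$. The point to verify is that Impish never runs out of fresh pairs for a free move: I would argue that whenever no fresh pair is available, the half-claimed pair's empty vertex is the unique remaining vertex, so it is precisely Admirable's final, unanswered move---and Admirable completing that pair makes it bichromatic. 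Hence every matching edge is bichromatic at the end, so $e_1 \ge \floor{n/2} \ge (n-1)/2$ and $d = 2e_1 - (n-1) \ge 0$.

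The main obstacle is exactly this bookkeeping in the odd case: making precise that the free-move rule never forces Impish to label both endpoints of a matching edge (which would create a monochromatic matching edge and break the bound) and that the lone leftover vertex always falls to Admirable. I expect the clean way to package this is the round-by-round label-count invariant above, which simultaneously shows that at most one pair is ever half-claimed and pins down that the final uncovered vertex is Admirable's to play.
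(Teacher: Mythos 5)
Your proof is correct, but it takes a different route from the paper. The paper proceeds by induction on $n$: it peels off the last two vertices $P' = v_{n-1}v_n$, has Impish play a ``follow'' strategy (answering Admirable on $P'$ by taking the remaining vertex of $P'$, and otherwise playing the inductively optimal game on $P_{n-2}$), so that the edge $v_{n-1}v_n$ is forced to be bichromatic while the connecting edge $v_{n-2}v_{n-1}$ is conceded as unknown, giving $b_g(P_n) \ge b_g(P_{n-2}) + 1 - 1 \ge 0$. Your argument is the non-inductive, unrolled version of the same mechanism: you fix the matching $M = \{v_1v_2, v_3v_4, \dots\}$ once and for all, force every matched edge to be bichromatic by a pairing strategy, and concede all connector edges, yielding $d \ge 2\lfloor n/2\rfloor - (n-1) \ge 0$. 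The cost of dispensing with induction is exactly the odd-$n$ bookkeeping you flag: the paper's recursion absorbs the uncovered vertex silently (when Admirable exhausts the inner path, Impish's ``play $v_n$'' is precisely your free move into a fresh pair), whereas you must track the migrating half-claimed pair explicitly. Your invariant does close the gap: the number of half-claimed pairs after each full round equals the indicator that $v_n$ has been labeled, Impish never plays $v_n$ nor the empty vertex of a half-claimed pair, and whenever Impish would need a free move with no fresh pair remaining, the counting shows all $n$ vertices are in fact already labeled, so Impish is never stuck and every matched edge ends bichromatic. Two things your formulation buys that the paper's does not: for even $n$ you get the slightly stronger $b_g(P_n) \ge 1$ explicitly, and since your argument uses only the existence of a matching of size $\lfloor n/2\rfloor$, not path structure, it shows more generally that $b_g(G) \ge 2\lfloor n/2\rfloor - |E(G)|$ for any graph $G$ with a perfect or near-perfect matching---in particular $b_g(T)\ge 0$ for every such tree, which is relevant to the paper's Open Question~2.
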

\begin{proof}
    Let $P_n = v_1v_2 \cdots v_n$. Note the theorem follows if $n = 2$. We proceed by induction on $n \ge 3$. Let $P$ be the subpath of $P_n$ induced on $v_1, v_2, \dots, v_{n-2}$ and $P'$ be the $P_2$ subpath of $P$ induced by $v_{n-1}, v_n$. We present a strategy for Impish that, regardless of the moves Admirable makes, will produce the desired bound.

    If Admirable plays on $P$, Impish continues by playing the optimal game on $P$. If Admirable plays on $P'$, then Impish plays the remaining vertex on $P'$. If Admirable finishes a game on $P$ before moving to $P'$, Impish plays $v_n$. Regardless of the parity of $n$, this results in an Admirable-starts game on $P$ and the edge $v_{n-1}v_n$ being labeled 1. We apply the induction hypothesis on $P$ and consider that the edge between $P$ and $P'$, $v_{n-2}v_{n-1}$, has an unknown label, which produces the claimed lower bound of 0.
\end{proof}

We are not aware of any graph that violates this lower bound. We leave as an open question on whether there exists a graph $G$ with an accompanied Admirable strategy which produces a negative discrepancy.

\begin{openquestion}\label{blbc}
    For any graph $G$, does $b_g(G) \geq 0$?
\end{openquestion}

As an aside, the balance game has an interesting physical interpretation through the well-known Ising Model from statistical physics, e.g., see \cite[\track{Section 1.4.2}]{FV} \track{for the relevant definitions}. Two players assign spin states, \track{one assinging ``up", the other player ``down"}, to \track{the remaining unassigned} particles in alternating turns, competing over the 
energy of the final configuration at game completion. Open Question \ref{blbc}, reinterpreted, states: does the optimal strategy for the game keeps the Hamiltonian of the model nonnegative (with the assumption of an external magnetic field being absent)?

The complexity of Maker-Breaker games is well-studied, see for example \cite{B,RW,S}. As the Maker-Breaker instances of the cordiality game and balance game have a large amount of combinatorial structure on the associated set system, this leads to the following natural question. 

\begin{openquestion}
    Are the corresponding Maker-Breaker games to the cordiality and balance game PSPACE-complete?
\end{openquestion}

\section{Acknowledgments}
\track{We would like to thank the anonymous referee for their careful reading and their comments improving the paper. This work was done while third author was a graduate student at Georgia Tech and was supported by an NSF Graduate Research Fellowship under Grant No. DGE-1650044.}
\sout{Michael Wigal was supported by an NSF Graduate Research Fellowship under Grant No. DGE-1650044.}

 \bibliographystyle{plain}

\begin{thebibliography}{MMM}




\bibitem{BGKZ} T.~Bartnicki, J.~ Grytczuk, H.~A.~Kierstead, and X.~Zhu, The Map-Coloring Game. \textit{The American Mathematical Monthly} \textbf{114 (9)}(2007), 793--803.

\bibitem{BHKR} B.~Bre\v{s}ar, M.~A.~Henning, S.~Klav\v{z}ar, and D.~F.~Rall, Domination Games Played on Graphs, Springer, Cham 2021.

\bibitem{BKR} B.~Bre\v{s}ar, S.~Klav\v{z}ar, and D.~F.~Rall, Domination game and an imagination strategy. \textit{SIAM J. Discrete Math.} \textbf{24} (2010), 979--991.

\bibitem{BHZ} Cs.~Bujt\'as, M.~A.~Henning, and Z.~Tuza, Transversal game on hypergraphs and the $\frac{3}{4}$-conjecture on the total domination game. \textit{SIAM J. Discrete Math.} \textbf{30} (2016), 1830--1847.


\bibitem{B} J.~M.~ Byskov, Maker-maker and maker-breaker games are PSPACE-complete, Technical Report RS-04-14, BRICS, Department of Computer Science, Aarhus University (2004).

 
\bibitem{Cahit} I.~Cahit, Cordial graphs: a weaker version of graceful and harmonious graphs, \textit{Ars Combin.} \textbf{23} (1987) 201--207.

\bibitem{CE} V.~Chv\'{a}tal and P.~Erd\H{o}s, Biased positional games, \textit{Ann. Discrete Math.}, \textbf{2} (1978) 221--229.

\bibitem{Cichacz} S.~Cichacz, On some graph-cordial Abelian groups, \textit{Discrete Math.} \textbf{345(5)} (2022) 112815.

\bibitem{CGT} S.~Cichacz, A.~G\"orlich, Z.~Tuza, Cordial labeling of hypertrees, \textit{Discrete Math.} \textbf{313 (22)} (2013) 2518--2524.

\bibitem{DKN} K.~Driscoll, E.~Krop, M.~Nguyen, All trees are six-cordial, \textit{Electron. J. Graph Theory Appl.} \textbf{5(1)} (2017) 21-35.

\bibitem{F} A.~Fraenkel, Combinatorial Games: Selected Bibliography with a Succinct Gourmet Introduction \textit{Electron. J. Combin.}, Dynamic Surveys, DS2: Aug 9, (2012).

\bibitem{FV} S.~Friedli and Y.~Velenik, Statistical mechanics of lattice systems: a concrete mathematical introduction,  Cambridge University Press 2017.





\bibitem{GrahamSloane} R.L.~Graham and N.J.A.~Sloane, On additive bases and harmonious graphs, \textit{SIAM J. Algebraic Discrete Methods} \textbf{1} (1980) 382--404.

\bibitem{HKSS} D.~ Hefetz, M.~ Krivelevich, M.~ Stojakovi{\'c}, and T.~ Szab{\'o}, Positional Games (Oberwolfach Seminars), Birkhauser 2014.

\bibitem{HK} M.~A.~Henning and W.~B.~Kinnersley, Domination game: A proof of the $\frac{3}{5}$-conjecture for graphs with minimum degree at least two. \textit{SIAM J. Discrete Math.} \textbf{30} (2016), 20--35.

\bibitem{Hovey} M.~Hovey, $A$-cordial graphs, \textit{Discrete Mathematics}, \textbf{93} (1991) 183--194.

\bibitem{KWZ} W.~B.~Kinnersley, D.~B.~West, and R.~Zamani, Extremal problems for game domination number. \textit{SIAM J. Discrete Math.} \textbf{27} (2013), 2090--2107.

\bibitem{Krivelevich} M.~ Krivelevich, Positional Games, in: \textit{Proceedings of the International Congress of Mathematicians (ICM)}, \textbf{4} (2014) 355--379.

\bibitem{PP} R.~Patrias and O.~Pechenik Path-cordial abelian groups, \textit{Australasian Journal of Combinatorics} \textbf{80} (1) (2021) 157--166.

\bibitem{PW} O.~Pechenik and J.~Wise, Generalized graph cordiality, \textit{Discussiones Mathematicae Graph Theory} \textbf{32} (2012) 557--567.

\bibitem{RW} M.~ L.~Rahman, and T~Watson, 6-uniform Maker-Breaker game is PSPACE-complete, {\it Combinatorica}, (2023) 1--18.

\bibitem{Rosa} A.~Rosa, On certain valuations of the vertices of a graph, \textit{Theory of Graphs (Internat. Sympos., Rome, 1966)}, New York: Gordon and Breach, (1967) 349--355.

 \bibitem{S} J.~Schaefer. On the complexity of some two-person perfect information games. {\it J. Comput. Syst. Sci.}, {\bf 16} (2) (1978) 185-–225.

\bibitem{ST} P.~D.~Seymour and R.~Thomas, Graph searching and a min-max theorem for tree-width, {\it J. Combin. Theory Ser. 
B}, {\bf 58} (1) (1993) 22--33.

\bibitem{TZ} Z.~Tuza and X.~Zhu. \textit{Topics in Chromatic Graph Theory}, volume 156 of Encyclopedia of Mathematics and Its Applications, chapter: Colouring games, pages 304–326. Cambridge University Press,
(2015).

  
 \end{thebibliography}
 
 \end{document}